\tikzset{ext/.style={circle, draw,inner sep=1pt},int/.style={circle,draw,fill,inner sep=2pt},nil/.style={inner sep=1pt}}
\tikzset{exte/.style={circle, draw,inner sep=3pt},inte/.style={circle,draw,fill,inner sep=3pt}}
\tikzset{diagram/.style={matrix of math nodes, row sep=3em, column sep=2.5em, text height=1.5ex, text depth=0.25ex}}
\tikzset{diagram2/.style={matrix of math nodes, row sep=0.5em, column sep=0.5em, text height=1.5ex, text depth=0.25ex}}
\theoremstyle{plain}
  \newtheorem{thm}{Theorem}
  \newtheorem{defi}{Definition}
\theoremstyle{definition}
  \newtheorem{ex}{Example}
  \newtheorem{rem}{Remark}
\newcommand{\R}{{\mathbb{R}}}
\newcommand{\K}{{\mathbb{K}}}
\newcommand{\e}{{\mathsf{e}}}
\newcommand{\ee}{{\mathsf{ue}}}
\newcommand{\Ger}{{\mathsf{Ger}}}
\newcommand{\Tw}{\mathit{Tw}}
\newcommand{\op}{\mathcal}
\newcommand{\Br}{\mathsf{Br}}
\newcommand{\Lie}{\mathsf{Lie}}
\newcommand{\hoLie}{\mathsf{hoLie}}
\newcommand{\hoe}{\mathsf{hoe}}
\newcommand{\Conv}{\mathrm{Conv}}
\newcommand{\Com}{\mathsf{Com}}
\newcommand{\uCom}{\mathsf{uCom}}
\newcommand{\bpm}{\begin{pmatrix}}
\newcommand{\epm}{\end{pmatrix}}
\DeclareMathOperator{\End}{End}
\newcommand{\Der}{\mathsf{Der}}
\newcommand{\pLie}{\mathsf{preLie}}
\newcommand{\Pois}{\mathsf{Pois}}
\begin{document}
\title{Triviality of the higher Formality Theorem}

\author{Damien Calaque}
\address{I3M \\ Universit\'e  Montpellier 2\\ Case courrier 051 \\
34095 Montpellier cedex 5, France}
\email{damien.calaque@univ-montp2.fr}
\thanks{D.C. acknowledges the support of the Swiss National Science Foundation (grant $200021\underline{~}137778$).
T.W. acknowledges the support of the Swiss National Science Foundation (grants PDAMP2\_137151 and 200021\_150012).}
\author{Thomas Willwacher}
\address{Department of Mathematics\\ University of Zurich\\ Winterthurerstrasse 190 \\ 8057 Zurich, Switzerland}
\email{t.willwacher@gmail.com}

%\author{Thomas Willwacher}
%\address{Department of Mathematics\\ Harvard University\\ One Oxford Street \\ Cambridge, MA, USA}
%\email{t.willwacher@gmail.com}

%\thanks{The author was partially supported by the Swiss National Science Foundation (grant 200020-105450).}
% \subjclass[2000]{16E45; 53D55; 53C15; 18G55}
% \date{}
%\keywords{Formality, Deformation Quantization, Operads}

\begin{abstract}
It is noted that the higher version of M. Kontsevich's Formality Theorem is much easier than the original one. 
Namely, we prove that the higher Hochschild-Kostant-Rosenberg map is already a $\hoe_{n+1}$-formality quasi-isomorphism whenever $n\geq2$. 
\end{abstract}

\maketitle

\section{Introduction}

Let $A$ be any smooth commutative $\mathbb{K}$-algebra essentially of finite type. 
We may consider $A$ as an associative $\mathbb{K}$-algebra only, say $A_1$. As such we may form its Hochschild cochain complex
\[
C(A_1)=\bigoplus_{k\geq 0}Hom_{\mathbb{K}}(A^{\otimes k},A)[-k]
\]
endowed with the Hochschild differential.
The cohomology of $C(A_1)$ is computed by the Hochschild-Kostant-Rosenberg Theorem, which states that the Hochschild-Kostant-Rosenberg (HKR) map
\[
\Phi_{HKR}\colon S_A\Big(\Der(A)[-1]\Big) \longrightarrow C(A_1)
\]
sending a $k$-multiderivation to the obvious map $A^{\otimes k}\to A$ is a quasi-isomorphism of complexes.
Note that $S_A\Big(\Der(A)[-1]\Big)$ is endowed with the zero differential. 

In fact, the degree shifted complexes $S_A\Big(\Der(A)[-1]\Big)[1]$ and $C(A_1)[1]$ are endowed with differential graded (dg) Lie algebra 
structures, with the Schouten bracket and the Gerstenhaber bracket, respectively. 
The central result of deformation quantization is M. Kontsevich's formality Theorem \cite{K1}, stating that there 
is an $\infty$-quasi-isomorphism of dg Lie algebras
\[
S_A\Big(\Der(A)[-1]\Big)[1] \longrightarrow C(A_1)[1]
\] 
extending the HKR map. 

Actually, $S_A\Big(\Der(A)[-1]\Big)$ also carries the structure of a Gerstenhaber algebra (or, $\e_2$ algebra). 
Kontsevich's result has been strengthened by D. Tamarkin \cite{tamanother}, 
who showed that there also exists an $\infty$-quasi-isomorphism of $\Ger_\infty$ algebras (or, $\hoe_2$ algebras) 
$$
S_A\Big(\Der(A)[-1]\Big) \longrightarrow C(A_1)
$$ 
extending the HKR map, for some choice of $\Ger_\infty$ structure on the right hand side.

There is a natural generalization of the objects involved to the higher setting. First, we may consider the commutative algebra $A$ as an 
$\e_{n}=H_{-\bullet}(E_{n})$ algebra, say $A_{n}$, with trivial bracket. We assume that $n\geq2$. 
We may consider the $\e_{n}$-deformation complex which we also denote by $C(A_{n})$. 
There is a version of the Hochschild-Kostant-Rosenberg Theorem stating that the natural inclusion
\[
\Phi_{HKR}^n\colon S_A\Big(\Der(A)[-n]\Big) \to C(A_{n})
\]
is a quasi-isomorphism of complexes.

There is a natural $\e_{n+1}$ algebra structure on $S_A\Big(\Der(A)[-n]\Big)$, with product being the symmetric product and bracket 
being a degree-shifted version of the Schouten bracket.
Similarly, there is an explicit $\hoe_{n+1}$ structure on $C(A_{n})$, constructed by D. Tamarkin \cite{tamenaction}.
Here $\hoe_{n+1}=\Omega(\e_{n+1}^{\textrm{!`}})$ is the minimal resolution of the operad $\e_{n+1}$, i.~e., the cobar construction of 
the Koszul dual cooperad $\e_{n+1}^{\textrm{!`}}\cong \e_{n+1}^*\{n+1\}$.
The higher formality conjecture states that the (quasi-iso)morphism $\Phi_{HKR}^n$ may be extended to an $\infty$-(quasi-iso)morphism 
of $\hoe_{n+1}$ algebras. 

The content of the present paper is to point out that this conjecture is somehow trivial.
\begin{thm}\label{thm:main}
For $n\geq 2$ the HKR map $\Phi_{HKR}^n\colon S_A\Big(\Der(A)[-n]\Big) \to C(A_{n})$ is already a quasi-isomorphism of $\hoe_{n+1}$ algebras. 
\end{thm}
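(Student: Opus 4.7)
The plan is to argue that $\Phi_{HKR}^n$ is in fact a \emph{strict} morphism of $\hoe_{n+1}$-algebras, where $S_A(\Der(A)[-n])$ is viewed as an $\hoe_{n+1}$-algebra through the standard projection $\hoe_{n+1}\twoheadrightarrow \e_{n+1}$. Concretely, I would show that all of Tamarkin's higher $\hoe_{n+1}$-operations on $C(A_n)$ vanish identically on the image of the HKR map, leaving only the wedge product and the shifted Schouten bracket on the polyvector side. Once this strictness is established, the theorem reduces to the classical higher HKR theorem recalled in the introduction.

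\textbf{Setup.} To implement this, I would present Tamarkin's $\hoe_{n+1}$-structure on $C(A_n)$ in a graph-operadic form: the operations are produced by a map $\hoe_{n+1} \to \Graphs_n$ from the Kontsevich-type operad of graphs appropriate to dimension $n$, together with a universal action of $\Graphs_n$ on polydifferential operators by edge-contractions with derivatives. Because polyvector fields act on $A$ tautologically as polydifferential operators, the HKR map is automatically strictly equivariant with respect to the $\Graphs_n$-actions on both sides. The task is then reduced to checking that, when evaluated on $S_A(\Der(A)[-n])$, only the graphs with no internal vertices, no loops at vertices, and at most one edge between any two vertices can contribute, namely precisely those graphs whose contraction yields the strict $\e_{n+1}$-operations.

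\textbf{Main argument and obstacle.} The essential step is a degree count. An element of $\Der(A)$ sits in homological degree $n$ after the shift, so a monomial in $S_A(\Der(A)[-n])$ has a rigidly prescribed polyvector grading. A graph operation coming from a graph with $k$ external vertices, $m$ internal vertices and $E$ edges produces an output of a fixed degree depending linearly on $n$, $m$ and $E$. For $n\geq 2$ one checks that this degree cannot match that of any polyvector output unless $m=0$, the graph has no loops, and has at most one edge between any two vertices. The same count at $n=1$ allows graphs with wheels and more complicated topology, which is precisely why the classical Kontsevich/Tamarkin formality is a genuinely deep statement. The main bookkeeping challenge will be to carry out this degree count uniformly for \emph{all} higher operations of $\hoe_{n+1} = \Omega(\e_{n+1}^{\textrm{!`}})$, including those coming from the cobar-construction generators in high arities, and to verify independence of the specific graph-operadic presentation chosen for Tamarkin's construction. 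Once the vanishing of the higher graph contributions is secured, strictness of $\Phi_{HKR}^n$ is automatic.
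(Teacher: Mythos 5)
Your overall target is the right one: the theorem does follow by showing that all higher $\hoe_{n+1}$-operations on $C(A_n)$ vanish on the image of $\Phi_{HKR}^n$, so that the map is a strict morphism and the statement reduces to the chain-level higher HKR theorem. But the route you propose to that vanishing has a genuine gap. The $\hoe_{n+1}$-structure in question is Tamarkin's, defined through the brace construction $\Br_{n+1}=\Tw\pLie_{\ee_n^*}$ (rooted trees decorated by the Hopf cooperad $\ee_n^*$) and the explicit map $T\colon\hoe_{n+1}\to\Br_{n+1}$; it is not given as a map into a Kontsevich-type graph operad acting by edge contractions. Replacing it by a graph-operadic model and proving vanishing there does not prove the theorem unless you also show that the two $\hoe_{n+1}$-structures on $C(A_n)$ agree (or at least are intertwined by HKR). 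That identification is not ``bookkeeping'': comparing brace-type operads with $\Graphs$-type operads is essentially the content of the formality of the little disks operads, i.e., exactly the kind of deep input this theorem is supposed to avoid. There are also internal problems with the setup: $H(\Graphs_n)\cong\e_n$, not $\e_{n+1}$, so a map $\hoe_{n+1}\to\Graphs_n$ cannot carry the structure you want; and the natural edge-contraction action of graph operads lives on polyvector fields, not on the Hochschild complex $C(A_n)$, so the equivariance of HKR ``on both sides'' that you call automatic presupposes an action on the target that has not been constructed.

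The degree count you present as the essential step is also insufficient. The $k$-ary generator of $\hoLie_{n+1}\subset\hoe_{n+1}$ has degree $2-k-n(k-1)$, which is divisible by $n$ whenever $k\equiv 2\pmod n$; applied to elements of $S_A(\Der(A)[-n])$, whose degrees are multiples of $n$, such operations are not excluded by any degree or parity argument --- already for $n=2$ and $k=4$ the output degree is that of a legitimate polyvector. The actual mechanism in the paper is structural, not numerical: (i) the generators $\underline{X_1\cdots X_k}$ with $k\geq3$ act through the arity-$\geq3$ components of the $\hoe_n$-structure on $A_n$, which vanish because $A_n$ is a strict commutative algebra with zero bracket; and (ii) the generators $X_0\wedge\underline{X_1\cdots X_k}$ with $k\geq2$ act by evaluating their arguments on the cooperad elements $\underline{X_1\cdots X_k}\in\e_n^{\textrm{!`}}(k)$, which are killed by the projection $\e_n^{\textrm{!`}}\to\Lie_n^{\textrm{!`}}$, while the HKR image (multiderivations) factors through exactly that projection. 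If you want to salvage your strategy, verify these two vanishing statements directly on Tamarkin's brace formulas rather than passing through a graph model.
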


This result might be known to the experts, but the authors are unaware of any reference. 
The proof boils down to a straightforward direct calculation.

\begin{rem}
As will be clear from the proof the statement of Theorem \ref{thm:main} holds true for $A$ the algebra of smooth functions on a smooth manifold, if one replaces the Hochschild complex by the continuous Hochschild complex, or by the complex of multi-differential operators.   
\end{rem}

\begin{rem}\label{rem2}
Note that there is a choice in the precise definition of the ``Hochschild'' complex $C(A_{n})$, 
essentially depending on a choice of cofibrant model for $\e_{n}$. 
We choose here the minimal model $\hoe_{n}$. 
For some other model $\op P$, solving the higher formality conjecture will be ``as complicated as'' picking a morphism $\op P\to \e_{n}$. 
The higher formality conjecture for that model can then be recovered by transfer, using Theorem \ref{thm:main}.
\end{rem}
\begin{rem}
Theorem \ref{thm:main} remains valid for any differential graded algebra $A$ as soon as one replaces $\mathsf{Der}(A)$ 
by its right derived variant $\mathbb{D}\mathsf{er}(A)$. 
Moreover, functoriality of the HKR map allows us to freely sheafify and get in particular that, for a quasi-projective derived scheme $X$ 
and $n\geq2$, the HKR map 
$$
\Phi_{HKR}^n\colon S_{\mathcal O_X}\Big(\mathbb{T}_X[-n]\Big) \to C\big((\mathcal O_X)_n\big)
$$
is a quasi-isomorphism of sheaves of $\hoe_{n+1}$ algebras (the only subtlety is to make $C\big((\mathcal O_X)_n\big)$ into a 
sheaf\footnote{One shall use the quasi-isomorphic sub-complex of multi-differential operators in $C(A_n)$, which sheafifies well. }). 
Note that this is slightly different from the main result of \cite[Section 5]{To}, also called higher formality, where it is proved that the 
$\e_n$ Hochschid complex of $X$ is weakly equivalent to the $E_n$ Hochschild complex of $X$ as a $\Lie_{n+1}$ algebra (in our context this is 
more or less the content of Remark \ref{rem2}, but then the hard part would be to prove that the $\Lie_{n+1}$ structures on Hochschild complexes 
appearing in the present paper and the ones appearing in \cite{To} are the same). 
\end{rem}
\begin{rem}
Observe that for $n\geq2$ we have $\e_n=\Pois_n$. 
All our results and constructions remain valid for every $n\in\mathbb{Z}$ if one uses $\Pois_n$ in place of $\e_n$. 
\end{rem}

\subsection*{Structure of the paper}

In section \ref{sec:notation} we recall some basic definitions and notation. 
Section \ref{sec:tamconstruction} contains a rewording of D. Tamarkin's construction of the $\hoe_{n+1}$ algebra structure on $C(A_{n})$.
The proof of Theorem \ref{thm:main} is a small direct calculation which is presented in section \ref{sec:theproof}.
%Finally section \ref{} contains a short discussion of the space of choices for the higher formality morphism.

\subsection*{Acknowledgements}

Damien Calaque thanks Bertrand To\"en for explaining him that HKR is true {\it without any assumption} in the derived context 
and for many discussions about the formality for derived schemes and stacks. 

\section{Notation}\label{sec:notation}

We will work over a ground field $\K$ of characteristic 0; all algebraic structures should be understood over $\mathbb{K}$. 
We will use the language of operads throughout. A good introduction can be found in the textbook \cite{lodayval}, from which we freely 
borrow some terminology. 

For a (co)augmented (co)operad $\mathcal O$ we denote by $\mathcal O_{\circ}$ the (co)kernel of the (co)augmentation. It is a pseudo-(co)operad: 
i.~e.~it does not have a (co)unit. 

\subsection{Our favorite operads}

\subsubsection{The $\e_n$ operad}

We will denote by $\e_n$ the homology of the topological operad $E_n$, for every $n\geq 1$. Note that we work with cohomological gradings 
(i.e.~our differentials have degree $+1$), so that homology sits in non-positive (cohomological) degree. 
 
As an example, $\e_1$ is the operad governing non-unital associative algebras. 
For $n\geq2$, the operad $\e_n$ is isomorphic to an operad obtained by means of a distributive law: $\e_n\cong \Com\circ \Lie_n$, 
where $\Lie_n:=\Lie\{n-1\}:=\mathcal S^{1-n}\Lie$ is a degree shifted variant of the $\Lie$ operad. 
In particular, the space $\e_n(N)$ is spanned by formal linear combinations of ``Gerstenhaber words'', like
\[
[X_1,X_2]\cdot X_4 \cdot [X_3,X_5]\,,
\]
in $N$ formal variable $X_1,\dots, X_N$, each occurring once.
We thus have an obvious map $\Lie_n\to \e_n$.

\subsubsection{The $\hoe_n$ dg operad}

The minimal resolution of $\e_n$, resp.~$\Lie_n$, is denoted by $\hoe_n$, resp.~$\hoLie_n$. 
In particular $\hoe_n=\Omega(\e_n^{\textrm{!`}})$ where $\Omega(\cdot)$ denotes the cobar construction and $\e_n^{\textrm{!`}}\cong \e_n^*\{n\}$ 
is the Koszul dual cooperad of $\e_n$. Note that $\hoe_n$ and $\hoLie_n$ are dg operads. 

One may understand elements of $\e_n^*(N)$ by linear combinations of ``co-Gerstenhaber words'', like 
\[
\underline{X_1X_2}\cdot X_4 \cdot \underline{X_3X_5X_6}\,,
\]
in $N$ formal variable $X_1,\dots, X_N$, each occurring once.
The underline shall indicate that one equates linear combinations that correspond to (signed) sums of shuffle permutations to zero.

\medskip

We may also consider the extended $\e_n$ operad $\ee_n=\mathsf{uCom}\circ\Lie_n$, which contains one nullary operation, 
i.e.~$\ee_n(0)=\mathsf{uCom}(0)=\K$. It governs unital $\e_n$-algebras and can be obtained as the homology of the topological 
little disks operad, which has a nullary operation acting by deleting disks.

\subsubsection{The $\pLie$ operad}\label{sssec:pLie}

We will denote by $\pLie$ the operad encoding pre-Lie algebras. Following \cite{CL}, it admits the following combinatorial description. 
We first introduce the set $\mathcal T(I)$ of rooted trees with vertices labelled by a finite set $I$, which is constructed {\it via} the 
following inductive process: 
\begin{itemize}
\item $\mathcal T(\emptyset)$ is empty. 
\item $\mathcal T(\{i\})$ consists of single rooted tree having only one root-vertex labelled by $i$: 
$\begin{tikzpicture}
\node (v0) at (0,-0.15) {};
\node[ext] (v1) at (0,0) {$i$};
\draw (v0) edge (v1);
\end{tikzpicture}$
\item Let $I$ be a finite set, $i\in I$ and a partition $I_1\sqcup\cdots\sqcup I_k=I-\{i\}$. 
Given rooted trees $\mathbf{t}_\alpha\in\mathcal T(I_\alpha)$, $\alpha=1,\dots,k$ one can construct a new rooted tree 
$B_+(\mathbf{t}_1,\dots,\mathbf{t}_k)\in\mathcal T(I)$ by grafting the root of each $\mathbf{t}_\alpha$, $\alpha=1,\dots,k$, on a common 
new root labelled by $i$: 
$$\begin{tikzpicture}
\node (v2) at (-1,0.5) {$\mathbf{t}_1$};
\node (v3) at (1,0.5) {$\mathbf{t}_k$};
\node (v0) at (0,-0.15) {};
\node (vv) at (0,0.5) {$\cdots$};
\node[ext] (v1) at (0,0) {$i$};
\draw (v0) edge (v1);
\draw (v2) edge (v1);
\draw (v3) edge (v1);
%\draw (v2) edge[dotted] (v3);
\end{tikzpicture}$$
\end{itemize}
Then $\pLie(I)$ is the vector space generated by $\mathcal T(I)$, and the operadic composition can be defined in the following way: if $J$ 
is another finite set, $i\in I$, $\mathbf{t}\in\mathcal T(I)$ and $\mathbf{t}'\in\mathcal T(J)$, then $\mathbf{t}\circ_i\mathbf{t}'$ is 
described as a sum over the set of functions $f$ from incoming edges at the vertex $i$ of $\mathbf{t}$ to the vertices of $\mathbf{t}'$. 
For any such $f$, the corresponding term is obtained by removing the vertex $i$ from $\mathbf{t}$, reconnecting the outgoing edge to 
the root of $\mathbf{t}'$ and reconnecting the incoming edges $e$ to the vertex $f(e)$. 
The root of the result is taken to be the root of $\mathbf{t}$ if this is not $i$, or the root of $\mathbf{t}'$ otherwise. 

\medskip

Note that for any operad $\mathcal O$, the vector spaces $\prod_{n\geq0}\mathcal O(n)$ and $\prod_{n\geq0}\mathcal O(n)^{S_n}$ are naturally $\pLie$ algebras. 

\medskip

Recall also that there is a morphism of operads $\Lie\to\pLie$ which sends the generator of $\Lie$ to 
$\scriptstyle
\begin{tikzpicture}[scale=.85,baseline=.5]
\node (v0) at (0,-0.15) {};
\node[ext] (v1) at (0,0) {$1$};
\node[ext] (v2) at (0,0.6) {$2$};
\draw (v0) edge (v1);
\draw (v2) edge (v1);
\end{tikzpicture}
~-~
\begin{tikzpicture}[scale=.85,baseline=.5]
\node (v0) at (0,-0.15) {};
\node[ext] (v1) at (0,0) {$2$};
\node[ext] (v2) at (0,0.6) {$1$};
\draw (v0) edge (v1);
\draw (v2) edge (v1);
\end{tikzpicture}$~.
Hence any pre-Lie algebra is also a Lie algebra (obtained by skew-symmetrizing the pre-Lie product). 

\subsection{The Hochschild complex of a $\hoe_n$ algebra}

For a $\hoe_n$ algebra $B$, we define the ``Hochschild'' complex as the degree shifted convolution dg Lie algebra
\[
C(B)=\Conv\big(\ee_n^*\{n\}, \End_B\big)[-n]
\]
where $\End_B$ is the endomorphism operad of $B$ and the differential is the Lie bracket with the element of $C(B)$ corresponding to the $\hoe_n$ structure.
In particular, if $B=A_n$ is as in the introduction, then
\[
C(B) \cong A \oplus \Conv\big(\e_n^*\{n\}, \End_A\big)[-n]. 
\]
as complexes. 

Note that there is a natural inclusion 
\[
\Phi_{HKR}^n\colon S_A\Big(\Der(A)[-n]\Big) \longrightarrow C(A_{n})
\]
whose image consists of the elements in 
\[
\Conv\big(\mathsf{uCom}^*\{n\}, \End_A\big)
\]
that furthermore are (i.e.~take values in) derivations in each slot. Analogously to the usual HKR Theorem one may check the following result.
\begin{thm}[Higher Hochschild-Kostant-Rosenberg Theorem]
\label{thm:higherHKR}
The map $\Phi_{HKR}^n$ is a quasi-isomorphism of complexes for each $n\geq 2$.
\end{thm}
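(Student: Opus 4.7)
The plan is to reduce to the case of a polynomial algebra by a standard localization and smoothness argument, and then to carry out the cohomology computation there, using the decomposition of $\e_n^*$ coming from the distributive law $\e_n \cong \Com\circ\Lie_n$. For the reduction, both $S_A(\Der(A)[-n])$ and $C(A_n)$ are functors of $A$ compatible with étale localization, and the HKR map is natural in $A$; since $A$ is smooth essentially of finite type, it is étale-locally a polynomial algebra, so it suffices to prove the statement for $A=\K[x_1,\ldots,x_d]$.

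For $n\geq 2$ the distributive law gives the $S$-module decomposition
\[
\e_n(N) \cong \bigoplus_\pi \bigotimes_{B\in\pi}\Lie_n(B),
\]
indexed by set partitions $\pi$ of $\{1,\ldots,N\}$, and dually an analogous decomposition of $\e_n^*(N)$. I would plug this into the convolution algebra to obtain a splitting of $C(A_n)$ into summands labelled by an arity $N$ and a partition $\pi$. Because the Lie bracket on $A_n$ is trivial, the Maurer--Cartan element $\mu$ encoding the $\hoe_n$-structure on $A_n$ is purely commutative (concentrated in the $\uCom^*$-direction), and I would verify that the differential $d_\mu=[\mu,\cdot]$ respects the decreasing filtration by the total number of indices lying in blocks of size $\geq 2$.

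On the associated graded, each non-singleton block of size $k\geq 2$ should contribute a Koszul-type subcomplex $\bigl(\Hom(\Lie_n^*(k),\Hom(A^{\otimes k},A)),\,d'_\mu\bigr)$ whose cohomology I expect to vanish for $A=\K[x_1,\ldots,x_d]$: this should follow from the Koszul duality between $\Com$ and $\Lie_n$ (up to operadic suspension $\{n\}$), combined with the acyclicity of the Koszul resolution of the polynomial algebra. Only partitions into singletons would then survive to the $E_1$-page; on these the remaining differential enforces the derivation property in each argument, yielding exactly $S_A(\Der(A)[-n])$ in the image of $\Phi_{HKR}^n$. The hard part will be establishing this acyclicity cleanly, with careful tracking of operadic suspensions and signs, either via an explicit contracting homotopy or a separate spectral-sequence argument; once that is in place, the identification with $\Phi_{HKR}^n$ is immediate.
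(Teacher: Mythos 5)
Your overall strategy is the right one and is close in spirit to the paper's: exploit the vanishing of the bracket on $A_n$ together with the distributive-law decomposition $\e_n\cong\Com\circ\Lie_n$, so that the only surviving differential is the one induced by the commutative product, and then reduce to a statement about commutative (Harrison/Andr\'e--Quillen) cohomology of a smooth algebra. The paper packages this more efficiently: since the bracket vanishes, the whole complex is \emph{identified} as $S_A\big(\Conv(\Lie^*\{1\},\End_A)[-n]\big)=S_A\big(\mathbb{D}\mathsf{er}(A)[-n]\big)$ --- one full Harrison-type factor per block of the partition, with no filtration or spectral sequence needed --- and then one invokes the fact that $\Der(A)\to\mathbb{D}\mathsf{er}(A)$ is a quasi-isomorphism for $A$ smooth and essentially of finite type. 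In particular no reduction to polynomial algebras is required (and note that a smooth algebra is only \'etale \emph{over} a polynomial algebra \'etale-locally, not equal to one, so your localization step would need a small base-change argument for the Harrison complex in any case).

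The genuine gap is in your key acyclicity claim. It is not true that the summands attached to non-singleton blocks form an acyclic complex. First, for a fixed block size $k$ the piece $\Hom\big(\Lie_n^*(k),\Hom(A^{\otimes k},A)\big)$ carries no internal differential at all --- the Harrison-type differential strictly increases the block size --- so the complex you must consider is the whole arity-$\geq 2$ truncation of the Harrison complex of one block. That truncation is \emph{not} acyclic, even for $A=\K[x]$: the full Harrison complex has cohomology $\Der(A)$ concentrated in the arity-one part, and the long exact sequence of $0\to(\text{arity}\geq2)\to(\text{all arities})\to\Hom(A,A)\to0$ forces $H^1(\text{arity}\geq 2)\cong\Hom(A,A)/\Der(A)\neq0$. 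This non-vanishing class is exactly what cuts $\Hom(A,A)$ down to $\Der(A)$; your closing remark that ``the remaining differential enforces the derivation property'' is the symptom that the associated graded cannot be acyclic in the way you assert (if it were, nothing on the $E_1$-page could impose that condition). Relatedly, your filtration by the number of indices lying in blocks of size $\geq 2$ does not isolate the Harrison differential on the associated graded, since every block-enlarging term of $[\mu,\cdot]$ changes that count. The repair is to drop the filtration entirely: treat each block-factor as the \emph{full} Harrison complex $\Conv(\Lie^*\{1\},\End_A)$ (all arities, including one), identify the total complex with $S_A$ of it, and apply the commutative HKR/Andr\'e--Quillen statement $\Der(A)\simeq\mathbb{D}\mathsf{er}(A)$ to that single factor. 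With that correction your argument becomes the paper's.
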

\begin{proof}[Sketch of proof]
One simply observes that, since the bracket on $A$ is zero, we have  
$$
\Conv\big(\ee_n^*\{n\}, \End_A\big)[-n]=S_A\Big(\Conv\big(\Lie^*\{1\}, \End_A\big)[-n]\Big)=S_A\big(\mathbb{D}\mathsf{er}(A)[-n]\big)\,.
$$
If $A$ is smooth and essentially of finite type then the canonical map $\mathsf{Der}(A)\to\mathbb{D}\mathsf{er}(A)$ is a quasi-isomorphism. 
\end{proof}
%Note that the Theorem in fact holds true for every $n$. However, there are some notational difficulties depending on what one denotes by $\e_1$, that we want to disregard.
%Hence we will generally restrict to $n\geq 3$ in the rest of the paper.

\section{A version of D. Tamarkin's proof of the higher Deligne conjecture}\label{sec:tamconstruction}

The goal of this section is to recall D. Tamarkin's proof of the following result.
\begin{thm}[Higher Deligne conjecture, see \cite{tamenaction}]\label{thm:enen1}
%The deformation complex of an $E_n$ algebra carries a natural structure of an $E_{n+1}$ algebra. Concretely, 
The complex $C(A_{n})$ carries a natural $\hoe_{n+1}$ action, given by explicit formulas, for $n\geq 2$.
\end{thm}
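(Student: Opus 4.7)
The plan is to build the $\hoe_{n+1}$-algebra structure on $C(A_n)$ as a Maurer--Cartan element in the convolution dg Lie algebra $\Conv(\e_{n+1}^{\textrm{!`}}, \End_{C(A_n)})$, using only natural operations that make sense whenever $A_n$ is replaced by an arbitrary $\hoe_n$-algebra $B$. Equivalently, I would produce a dg operad map $\hoe_{n+1} \to \End_{C(B)}$ by specifying, for each generator of $\hoe_{n+1} = \Omega(\e_{n+1}^*\{n+1\})$, a natural multilinear operation on $C(B)$ and verifying the relations imposed by the cobar differential.

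First, I would identify the strict binary structure already carried by the convolution complex. The convolution pre-Lie product on $\prod_{N\geq 1} \Hom_{S_N}\bigl(\ee_n^*(N), \End_B(N)\bigr)$, built from the partial cooperadic coproducts of $\ee_n^*$, provides after antisymmetrization and degree shift the degree $-n$ Lie bracket on $C(B)$ expected of the $\e_{n+1}$-structure. Moreover, using the splitting $\ee_n^* \cong \uCom^* \circ \Lie_n^*$ and the cofree cocommutative coalgebra structure on the outer $\uCom^*$, one obtains a natural strictly commutative and associative shuffle cup product on $C(B)$, which plays the role of the $\Com$-generator of $\e_{n+1}$.

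Next, following Tamarkin, I would assemble all natural operations built out of these two binary operations, together with their iterated interactions mediated by the $\hoe_n$-differential, into a combinatorial operad $\mathcal{G}$ of decorated graphs (a variant of the Kontsevich graphs operad in dimension $n+1$) acting tautologically on $C(B)$. The key claim is that $\mathcal{G}$ is a dg operad model of $\e_{n+1}$, with the binary $\e_{n+1}$-generators lifting to the pre-Lie bracket and shuffle cup product constructed above; cofibrancy of $\hoe_{n+1}$ then supplies the desired composite $\hoe_{n+1} \to \mathcal{G} \to \End_{C(B)}$.

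The main obstacle, and the technical heart of the argument, is the identification $\mathcal{G} \simeq \e_{n+1}$. Concretely this amounts to checking that the explicit candidate satisfies all the coherence relations dictated by the cobar differential on $\e_{n+1}^{\textrm{!`}}$ combined with the inner differential on $\End_{C(B)}$ induced by the $\hoe_n$-structure on $B$. This is a delicate but largely routine sign and boundary bookkeeping in the graph complex; once completed, naturality ensures that the same formulas apply uniformly to every $\hoe_n$-algebra $B$, and in particular to $B = A_n$, where the Hochschild differential is recovered as the Lie bracket with the element encoding the trivial $\hoe_n$-structure on $A$.
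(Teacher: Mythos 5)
Your overall architecture --- an intermediate combinatorial operad of decorated tree-like objects acting naturally on $C(B)$ for every $\hoe_n$-algebra $B$, with the convolution Lie bracket and the cup product as the two binary operations --- matches the paper, which realizes this as the twisted decorated pre-Lie operad $\Br_{n+1}\subset \Tw\pLie_{\ee_n^*}\{n\}$. But there is a genuine gap at the step where you pass from $\mathcal G$ to the $\hoe_{n+1}$-action. You propose to show that $\mathcal G$ is ``a model of $\e_{n+1}$'' and then invoke cofibrancy of $\hoe_{n+1}$ to obtain the composite $\hoe_{n+1}\to\mathcal G\to\End_{C(B)}$. Computing $H(\mathcal G)\cong\e_{n+1}$ does not by itself produce a quasi-isomorphism $\hoe_{n+1}\to\mathcal G$: that requires formality of $\mathcal G$, an additional and a priori hard statement (for $n=1$ it is exactly the classical Deligne conjecture, whose solution needs the formality of the little disks operad). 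The paper sidesteps this entirely: it writes down Tamarkin's map $T\colon\hoe_{n+1}\to\Br_{n+1}$ explicitly on generators --- $\underline{X_1\cdots X_k}$ goes to an internal corolla decorated by $\underline{X_1\cdots X_k}\in\e_n^{\textrm{!`}}(k)$, the generator $X_0\wedge\underline{X_1\cdots X_k}$ goes to the corresponding corolla with external root $0$, and all other generators go to zero --- and then verifies by hand that $T$ commutes with the differentials. The homology computation $H(\Br_{n+1})\cong\e_{n+1}$ (Theorem \ref{thm:HBrn}) is stated in the paper but explicitly not used for Theorem \ref{thm:enen1}.

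Relatedly, the ``explicit candidate'' whose coherence relations you propose to check is never actually specified beyond the two binary operations: the essential new data are the values of the map on the higher generators, i.e.\ the higher brace operations, and these are precisely what Tamarkin's prescription supplies. Without that prescription the ``routine sign and boundary bookkeeping'' has nothing to operate on; moreover, the theorem's assertion that the action is given by explicit formulas --- which is what the proof of Theorem \ref{thm:main} later consumes --- would not be delivered by an abstract lifting argument. A smaller point: the paper's combinatorial operad is built from $\ee_n^*$-decorated rooted trees via operadic twisting rather than from a Kontsevich-type graph operad, though this difference is largely cosmetic.
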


\subsection{Braces for a Hopf cooperad}

For any coaugmented cooperad $\op C$ we may define its bar construction $\Omega(\op C)$, which is an operad. 
For example $\hoe_n:=\Omega(e_n^{\textrm{!`}})$.
Here we define a similar construction, the brace construction, which takes a Hopf cooperad $\op C$ and returns an operad $\Br_{\op C}$.

\subsubsection{$\mathcal C$-operads}

In this paragraph we introduce the notion of a $\mathcal C$-operad. %, which one shall think as operads ``parametrized by a Hopf cooperad $\mathcal C$''. 
A $\mathcal C$-operad is an operad $\mathcal O$ such that each $\mathcal O(n)$ carries an $S_n$ equivariant right $\op C(n)$ module structure. We require that furthermore the right module structures are compatible with the operadic compositions, by which we mean that the following diagram commutes:
\[
 \begin{tikzpicture}
  \matrix(m)[diagram]{
  \op O(k)\otimes \op O(n_1)\otimes \cdots \otimes \op O(n_k) \otimes \op C(\sum_j n_j) &  \op O(\sum_j n_j) \otimes \op C(\sum_j n_j)\\
  \op O(k)\otimes \op O(n_1)\otimes \cdots \otimes \op O(n_k) \otimes \op C(k)\otimes \op C(n_1)\otimes \cdots \otimes \op C(n_k) 
  & \\
  \op O(k)\otimes \op O(n_1)\otimes \cdots \otimes \op O(n_k)
  & \op O(\sum_j n_j) \\
  };
  \draw[-latex] (m-1-1) edge (m-1-2) edge (m-2-1) 
  (m-2-1) edge (m-3-1) (m-3-1) edge (m-3-2) (m-1-2) edge (m-3-2);
 \end{tikzpicture}.
\]
Here the two horizontal arrows are the operadic compositions. The upper left vertical arrow is defined using the cooperad structure on $\op C$. The remaining two arrows are defined by using the right action of $\op C(n)$ on $\mathcal O(n)$.
Note that for $\op C=\uCom^*$ a $\op C$-operad is just an ordinary operad.

\begin{ex}
One can check that for an operad $\mathcal P$ and a Hopf cooperad $\mathcal C$, the convolution operad 
$\mathsf{Hom}(\mathcal C\{k\},\mathcal P)$ is naturally a $\mathcal C$-operad for any $k$. Here the right action is obtained by composition with the multiplication on $\op C(n)$ from the right, i.~e.,
\[
 (f\cdot c)(x) =f(cx) 
\]
for $f\in \mathsf{Hom}(\mathcal C\{k\},\mathcal P)(n)$, $c\in \op C(n)$ and $x\in \op C\{k\}(n)$. 
\end{ex}

\subsubsection{The $\pLie_{\mathcal C}$ operad}

In this section we introduce an operad encoding $\mathcal C$-pre-Lie algebras, which are to $\mathcal C$-operads 
what pre-Lie algebras are to operads. For simplicity, we will assume that the Hopf cooperad $\op C$ satisfies: $\op C(0)\cong \op C(1)\cong \K$.
Then one has natural maps 
\begin{equation}\label{equ:extension}
 \op C(j) \to \op C(j+k)\otimes \underbrace{\op C(1)\otimes \cdots \op C(1)}_{j\times}
 \otimes \underbrace{\op C(0)\otimes \cdots \op C(0)}_{k\times} \cong \op C(j+k)
\end{equation}
where the arrow is a cocomposition and the right hand identification uses the canonical identifications $\op C(0)\cong \op C(1)\cong \K$ as algebras.

\begin{ex}
 The most interesting example for us is the Hopf cooperad $\op C=\ee_n^*$, whose $j$-ary cooperations may be interpreted as the cohomology of the configuration space of $j$ points in $\R^n$. There are forgetful maps from the configuration space of $k+j$ points to that of $j$ points, and in this case the extension map \eqref{equ:extension} above is just the pull-back of the forgetful map, forgetting the location of the last $k$ points.
\end{ex}

The operad $\pLie_{\mathcal C}$ consists of rooted trees decorated by a Hopf cooperad $\mathcal C$. Namely, for every finite set $I$, 
$$
\pLie_{\mathcal C}(I):=\bigoplus_{\mathbf{t}\in\mathcal T(I)}
\left(\bigotimes_{i\in I}\mathcal C(\mathbf{t}_i)\right)\,,
$$
where $\mathbf{t}_i$ is the set/number of incoming edges at the vertex labelled by $i$. 

The operadic structure on the underlying trees is the one described in section \ref{sssec:pLie}. 
Let us now explain what happens to the decoration when doing the partial composition $\circ_i$. 
Borrowing the notation from \S\ref{sssec:pLie}, for every $f$ we apply a cooperation:\footnote{Note that possible cooperations that we may apply to elements of $\mathcal C(\mathbf{t}_i)$ are naturally labelled by rooted trees with leaves labelled by $\mathbf{t}_i$. The cooperation we apply here is the one labelled by the tree $\mathbf{t}'$, with labelled leaves attached according to $f$ and with the labelling of the vertices of $\mathbf{t}'$ disregarded.} 
$$
\mathcal C(\mathbf{t}_i)\longrightarrow
%\mathcal C(J)\otimes
\bigotimes_{j\in J}\mathcal C\big(f^{-1}(j)\cup \mathbf{t}'_j\big)
%\longrightarrow\bigotimes_{j\in J}\mathcal C\big(f^{-1}(j)\big)
\,.
$$
Then observe that we have natural maps 
\[
\mathcal C\big(f^{-1}(j)\cup \mathbf{t}'_j\big)
\otimes \mathcal C\big(\mathbf{t}'_j\big)
\to
\mathcal C\big(f^{-1}(j)\cup \mathbf{t}'_j\big)
\otimes 
\mathcal C\big(f^{-1}(j)\cup \mathbf{t}'_j\big)
\to 
\mathcal C\big(f^{-1}(j)\cup \mathbf{t}'_j\big)
=C\big((\mathbf{t}\circ_i\mathbf{t}')_j\big)
\]
where the first map uses the extension map \eqref{equ:extension} on the second factor and the second map uses the Hopf structure, i.~e., it is the multiplication of the algebra $\mathcal C\big(f^{-1}(j)\cup \mathbf{t}'_j\big)$. 

\medskip

The definition is made such that $\pLie_{\op C}$ naturally acts on the convolution ``algebra''
$$
\Conv_0(\mathcal C\{k\},\mathcal P):=\prod_{n\geq0}\mathsf{Hom}(\mathcal C\{k\},\mathcal P)(n)^{S_n}. 
$$
More generally, for any $\mathcal C$-operad $\mathcal O$, $\prod_n\mathcal O(n)^{S_n}$ is a $\pLie_{\mathcal C}$ algebra 
(and we have already seen that $\mathsf{Hom}(\mathcal C\{k\},\mathcal P)$ is a $\mathcal C$-operad). 

\begin{rem} 
 Note that for $\mathcal C=\uCom^*$ we recover the usual $\pLie$ operad, i.~e., $\pLie_{\uCom^*}=\pLie$. Furthermore the construction $\pLie_{\mathcal C}$ is functorial in $\mathcal C$. Hence from the unit map $\uCom^*\to \mathcal C$ we obtain a map of operads $\pLie \to \pLie_{\mathcal C}$ for any Hopf cooperad $\mathcal C$.
 In particular, any $\pLie_{\mathcal C}$ algebra is a $\Lie$ algebra, and we recover the usual Lie algebra structure on $\Conv_0(\mathcal C\{k\},\mathcal P)$.
\end{rem}

Next, if we have a morphism $f:\Omega(\op C\{k\})\to \mathcal P$ of operads, then it determines a Maurer-Cartan element $\gamma_f$ 
in $\Conv_0(\op C\{k\},\mathcal P)$, and a new convolution dg Lie algebra $\Conv_f(\op C\{k\},\mathcal P)$ is obtained from the original one one by twisting with $\gamma_f$. Note that we may drop the ``$f$'' from the notation when there is no ambiguity. 
In general, the action of the operad $\pLie_{\op C}$ will unfortunately not lift to an action on the twisted convolution Lie algebra $\Conv_f(\op C\{k\},\mathcal P)$. 
However, we now may invoke the formalism of operadic twisting \cite{vastwisting}.
Given an operad $\op P$ together with a map $\Lie\to \op P$, operadic twisting produces:
\begin{itemize}
 \item An operad $\Tw \op P$, the \emph{twisted operad}.
 \item Operad maps $\Lie\to \Tw\op P\to \op P$ whose composition is the given map $\Lie\to \op P$.
 \item The operad $\Tw \op P$ has the property that if we are given a $\op P$ algebra $A$, together with a Maurer-Cartan element $m$ of the $\Lie$ algebra $A$, then the action of the $\Lie$ operad on the twisted $\Lie$ algebra $A^m$ lifts naturally to an action of the operad $\Tw \op P$.
\end{itemize}

In our case we obtain an operad\footnote{Which does NOT depend on $f$.} $\Tw\pLie_{\op C}$, acting naturally on the twisted 
convolution algebra $\Conv_f(\op C\{k\},\mathcal P)$.
Concretely the operad $\Tw\pLie_{\op C}$ is a completed version of the operad generated by $\pLie_{\op C}$ and one formal nullary element.
%corresponding to the Maurer-Cartan element $\gamma_f$. 
The differential on  $\Tw\pLie_{\op C}$ is defined so that upon replacing the formal nullary element by the Maurer-Cartan element $\gamma_f$ we obtain an action of  $\Tw\pLie_{\op C}$ on $\Conv_f(\op C\{k\},\mathcal P)$.
The formal nullary element we denote in pictures by coloring the appropriate vertices of the tree black. We call these vertices the internal vertices (as opposed to external ones). 
\[
 \begin{tikzpicture}[yscale=-1]
\node (v0) at (0,.5) {};
  \node[ext] (v1) at (0,0) {2};
  \node[ext] (v2) at (-.5,-.7) {1};
  \node[int] (v3) at (.5,-.7) {};
  \node[ext] (v4) at (0,-1.4) {3};
  \node[int] (v5) at (1,-1.4) {};
  \draw (v0) edge (v1) (v1) edge (v2) edge (v3) (v3) edge (v4) edge (v5);   
 \end{tikzpicture}
\]  

Combinatorially, the differential on $\Tw\pLie_{\op C}$ splits vertices, either an internal vertex into two internal vertices, or an external vertex into an external and an internal vertex.

We define the brace construction $\Br(\op C)= \Tw\pLie_{\op C}$ as a synonym for the twisted pre-Lie operad.
By construction $\Br(\ee_n^*)$ acts on the convolution dg Lie algebra 
\[
 C(B)[n] = \Conv\big(\ee_n^*\{n\}, \End_B\big)
\]
for any $\hoe_n$ algebra $B$.
For cosmetic reasons and consistency with the literature we make the following definition.

\begin{defi}
 We define the higher braces operad $\Br_{n+1}$ to be the suboperad
 \[
  \Br_{n+1}\subset \Br(\ee_n^*)\{n\}
 \]
 formed by operations whose underlying trees contain no internal vertices with less than 2 children.
\end{defi}

By very definition, the operad $\Br_{n+1}$ acts naturally on the Hochschild complex $C(B)$.

\begin{ex}
%$\Omega(\op C)$-algebra $V$.
%The brace construction of the associative cooperad is the same as the usual braces operad, up to suspension.
The higher braces operad $\Br_2$ is just the usual braces operad, which acts 
on the Hochschild complex of an associative algebra as usual.
% Note that our definition of the Hochschild complex above differs from the standard one in this case by a degree shift by one unit.
% Concretely, our definition is made such that the Hochschild complex is a Lie algebra, not a $\Lie\{1\}$ algebra.
% \[
% \Br \cong \Br(\Ass^*)\{1\}.
% \]
%Taking $\op C=\Ass_1^{\textrm{!`}}$ one obtains the usual braces operad.
\end{ex}

%Next one assumes that there is an operad map $\hoLie_n\to\Br_0(\op C)$ and performs operadic twisting to obtain an operad $\Br(\op C)$ with non-trivial differential. It is defined so as to act on the complex $\Def(\Omega(\op C)\to \End(V))$.
% In general, we will define the higher braces operads as 
% \[
% \Br_{n+1} := \Br(\e_n^{*})\{n\}.
% \] 
% 
% We saw above that $\Br(\e_n^{*})$ acts (in particular) on $\Conv_f(\mathcal \e_n^*\{n-1\},\End)$
% %for $\op C=e_n^{\textrm{!`}}$ one obtains an operad $\Br_{n+1}:=\Br(e_n^{\textrm{!`}})$ that acts naturally on the deformation complex
% \textcolor{red}{By construction, we see that the operad $\Br_{n+1}$ acts on the degree shifted deformation complex 
% \[
%  \Def(\hoe_n\to \End(A))[-n]
% \]
% for any $\hoe_n$ algebra $A$.
% In fact, one may check that the action extends to the Hochschild complex
% \[
% C(A) = A \oplus \Def(\hoe_n\to \End(A))[-n].
% \]}

%{\bf Task/Conjecture:} Show that $H(\Br_{n+1})=e_{n+1}$.

\subsection{Tamarkin's morphism}

D. Tamarkin proved Theorem \ref{thm:enen1} by noting that for $n\geq 2$ there is a quite simple but very remarkable explicit map 
\[
 T\colon \hoe_{n+1}\to \Br_{n+1}.
\]
It is defined on generators by the following prescription:
\begin{itemize}
 \item Generators of the form $\underline{X_1\cdots X_k}\in e_{n+1}^{\textrm{!`}}(k)$ are mapped to a corolla of the form 
\begin{equation}
\label{equ:blackcorolla}
\begin{tikzpicture}
\node[ext] (v2) at (-1,0.5) {$1$};
\node[ext] (v3) at (1,0.5) {$k$};
\node (v0) at (0,-0.5) {};
\node (vv) at (0,0.5) {$\cdots$};
\node[int] (v1) at (0,0) {};
\draw (v0) edge (v1);
\draw (v2) edge (v1);
\draw (v3) edge (v1);
%\draw (v2) edge[dotted] (v3);
\end{tikzpicture}
\end{equation}

decorated by $\underline{X_1\cdots X_k}\in e_n^{\textrm{!`}}(k)$
\item Generators of the form $X_0\wedge \underline{X_1\cdots X_k}\in e_{n+1}^{\textrm{!`}}(k+1)$ are mapped to a corolla of the form 
\begin{equation}\label{equ:corwhite}
\begin{tikzpicture}
\node[ext] (v2) at (-1,0.5) {$1$};
\node[ext] (v3) at (1,0.5) {$k$};
\node (v0) at (0,-0.5) {};
\node (vv) at (0,0.5) {$\cdots$};
\node[ext] (v1) at (0,0) {$0$};
\draw (v0) edge (v1);
\draw (v2) edge (v1);
\draw (v3) edge (v1);
%\draw (v2) edge[dotted] (v3);
\end{tikzpicture}
\end{equation}
decorated by $\underline{X_1\cdots X_k}\in e_n^{\textrm{!`}}(k)$. In the special case $k=1$ one takes the (anti-)symmetric combination of the two possible choices.
\item All other generators are mapped to zero.
\end{itemize}
This prescription indeed gives a morphism $\hoe_{n+1}\to \Br_{n+1}$ of the underlying graded operads (because, as such, $\hoe_{n+1}$ is free).
This leaves us with the task of verifying that the map $T:\hoe_{n+1}\to \Br_{n+1}$ commutes with the differentials.
It suffices to check this on the generators.
Furthermore it suffices to check the statement on generators of one of the forms
\begin{align}\label{equ:threegenerators}
 &\underline{X_1\cdots X_k}
 & &\underline{X_1\cdots X_k}\wedge \underline{X_{k+1}\cdots X_{k+l}}
 & &X_1\wedge \underline{X_{2}\cdots X_{k+1}}\wedge \underline{X_{k+2}\cdots X_{k+l+1}}
\end{align}
since in all other cases the differential of the generator and the generator itself are mapped to zero, so that the map $T$ trivially commutes with the differentials.
One has to check each of the three types of generators above in turn. 
The calculation is a bit lengthy, due to several special cases that need to be considered.
%Since the construction of $T$ is essentially the result of D. Tamarkin \cite{tamenaction} that we just recall here, we will be brief and not produce the whole calculation.
Since the construction of $T$ is essentially the result of D. Tamarkin \cite{tamenaction} we will only show how to handle a few cases in Appendix \ref{app} as an illustration.  

\section{$\Br_n$ is an $E_n$ operad}
\begin{thm}\label{thm:HBrn}
 The above map $T:\hoe_{n+1}\to \Br_{n+1}$ is a quasi-isomorphism of operads for all $n=2,3,4,\dots$, so in particular $H(\Br_{n+1})\cong \e_{n+1}$.
\end{thm}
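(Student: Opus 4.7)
The goal is to show that the explicit map $T\colon \hoe_{n+1}\to \Br_{n+1}$ induces an isomorphism on cohomology, equivalently that $H(\Br_{n+1})\cong \e_{n+1}$ with $T$ realizing the identification. My plan proceeds in two parts.

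First, I would verify that $T$ is injective on cohomology by examining its action on the arity-$2$ generators of $\e_{n+1}\subset \hoe_{n+1}$, namely the commutative product $\underline{X_1 X_2}$ and the shifted Lie bracket $X_1\wedge X_2$. Under $T$ these map respectively to the corolla of shape \eqref{equ:blackcorolla} with $k=2$ and to the corolla of shape \eqref{equ:corwhite} with $k=1$ (in the latter case symmetrised over the two possible choices). A direct inspection shows that both elements are $d$-closed and nontrivial modulo exact terms, and the quadratic relations of the Koszul presentation $\e_{n+1}=\Com\circ \Lie_{n+1}$ can be checked directly on the level of cocycles. This would embed $\e_{n+1}$ as an operadic subspace of $H(\Br_{n+1})$ matching the image of $T$.

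Second, I would bound $H(\Br_{n+1})$ from above by the same copy of $\e_{n+1}$ via a spectral sequence. A natural filtration of $\Br_{n+1}\subset \Tw\pLie_{\ee_n^*}\{n\}$ is by the total number of vertices of the underlying tree. Since the twisting differential strictly increases this number, the $E_0$-differential vanishes and the $E_1$-page equals the full associated graded space, with $d_1$ being the original $d$. I would then construct an explicit contracting homotopy $h$ that absorbs an internal vertex $v$ back into its parent $w$ by multiplying their decorations using the Hopf cooperad structure on $\ee_n^*$. The identity $[d,h]=\mathrm{id}-\pi$, where $\pi$ is the projection onto the subspace of single-vertex corollas spanned by the image of $T$, would then force $H(\Br_{n+1})$ to agree with this image. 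Combined with Step~1, this yields the theorem.

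The main obstacle is the construction of the contracting homotopy $h$ and verification of the homotopy identity. The Hopf structure on $\ee_n^*$ provides the natural candidate absorption operation, but ensuring compatibility with the defining restriction on $\Br_{n+1}$ (internal vertices must have at least two children) and correctly bookkeeping signs coming from the vertex labels and cooperadic coproducts require a careful case analysis, especially at the root vertex and for low-arity decorations where vertex absorption would violate the restriction. A secondary technical point is convergence of the spectral sequence in the completed operad $\Tw\pLie_{\ee_n^*}$, which follows at each fixed arity from the finiteness of tree shapes of bounded size.
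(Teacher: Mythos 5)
Your Step 1 (checking that the two generating cocycles are closed, nontrivial, and satisfy the $\e_{n+1}$-relations up to coboundary) matches the easy half of the paper's argument, but the heart of the theorem is the computation $H(\Br_{n+1})\cong\e_{n+1}$, and there your proposal has a genuine gap. The filtration by the total number of vertices is degenerate: since the twisting differential raises the vertex count by exactly one, the induced differential on the associated graded vanishes and your $E_1$-page with $d_1=d$ is just the original complex again, so the spectral sequence buys you nothing --- the entire computation is silently pushed into the contracting homotopy $h$, which is asserted rather than constructed. Moreover, the claimed identity $[d,h]=\mathrm{id}-\pi$ with $\pi$ the projection onto corollas cannot be right as stated: the image of $T$ is not contained in the corollas (it is an operad map, so it contains all composites of the generating corollas, and a class such as the one corresponding to $[X_1,X_2]\cdot X_3$ is represented by a two-level tree), the corollas do not form a subcomplex (the differential of a corolla is a sum of two-level trees), and a single edge-contraction $h$ matched against the many vertex-splitting terms of $d$ does not produce $\mathrm{id}-\pi$ without cancellations that you would have to exhibit. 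You flag this yourself as ``the main obstacle,'' but it is not a technical detail --- it is the theorem.

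The paper takes a different and essentially unavoidable route for this part: it follows the $n=1$ argument of \cite[Appendix C]{vastwisting}, whose key step is to identify the relevant complex with (a summand of) the Hochschild complex of a \emph{free} $\e_{n+1}$-algebra regarded as an $\e_n$-algebra, and then to invoke the higher Hochschild--Kostant--Rosenberg Theorem (Theorem \ref{thm:higherHKR}) to compute its cohomology. That reduction to HKR on free algebras is the missing idea in your proposal; note that your argument never uses HKR at all, which is a signal that the real content has not been engaged. If you want a self-contained combinatorial proof, you would need at minimum a non-degenerate filtration whose associated graded complex is genuinely simpler, together with an explicit homotopy on that associated graded --- and even for $n=1$ (the classical braces operad) no such short argument is known; all existing proofs route through HKR-type statements or topological models.
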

In the case $n=1$ it is still true that there is a quasi-isomorphism $\hoe_{2}\to \Br_{2}$, but this morphism is much more complicated to construct than the Tamarkin quasi-isomorphism $T$ we described above. It can be obtained by combining a quasi-isomorphism from $\Br_2$ to the chains of the little disks operad \cite{KS1} with a choice of formality morphism of the little disks operad.

Theorem \ref{thm:HBrn} is not used in this note, so we only sketch the proof.
\begin{proof}[Sketch of proof]
 First one checks that $H(\Br_{n+1})\cong \e_{n+1}$. The proof of this statement follows along the lines of the proof of the $n=1$ case in \cite[Appendix C]{vastwisting}.
 The only point where the proof in loc. cit. has to be adapted is that in \cite[Appendix C.2]{vastwisting} one has to compute the Hochschild cohomology of a free $\e_{n+1}$ algebra, considered as an $\e_{n}$ algebra, instead of computing the Hochschild cohomology of a free $\e_2$ algebra, considered as an $\e_{1}$ algebra. The answer is provided by the higher Hochschild-Kostant-Rosenberg Theorem, i.~e. Theorem \ref{thm:higherHKR} above, instead of the usual one.
 
 Once one knows that $H(\Br_{n+1})\cong \e_{n+1}$, the statement of the Theorem is shown by checking that the induced map in cohomology
 \[
  \e_{n+1}\cong H(\hoe_{n+1})\to H(\Br_{n+1})\cong \e_{n+1}
 \]
is the identity, which amounts to checking that it is the identity on the two generators.
\end{proof}

The brace construction $\Br_{n+1}$ is intuitively similar to taking a product of $\Omega(\e_n^{\textrm{!`}})$ with an $E_1$ operad. So the above Theorem shall be understood as a version of the statement that the product of an $E_1$ operad with an $E_n$ operad is an $E_{n+1}$ operad.

\section{Proof of Theorem \ref{thm:main}}
\label{sec:theproof}
 One needs to verify that the action of all generators of $\hoe_{n+1}$ commutes with the map. On the left hand side all actions are zero except for that of the generators $X_1\wedge X_2$ (i.e., the Lie bracket) and of $\underline{X_1X_2}$ (i.e., the product). The fact that the HKR morphism respects the Lie bracket is a simple verification. The fact that the product is preserved is obvious. Hence it suffices to check that the action of the generators $\underline{X_1\cdots X_k}$ ($k\geq 3$) and $X_0\wedge \underline{X_1\cdots X_k}$ ($k\geq 2$) on the image of the HKR map is trivial.
The generators $\underline{X_1\cdots X_k}$ act using the corresponding components of the $\hoe_{n}$ structure on $A_n$, which vanish. Hence they act trivially (as long as $k\geq 3$). Note also that the image of the HKR map has only ``$\hoLie_{n}$ components'', i.e., the corresponding maps $\e_{n}^{\textrm{!`}}(N)\to \End(V)(N)$ factor through $\e_{n}^{\textrm{!`}}(N)\to \Lie_{n}^{\textrm{!`}}(N)$. However, the prescription for the action of the component $X_0\wedge \underline{X_1\cdots X_k}$ advises us to evaluate the arguments on components $\underline{X_1\cdots X_k}$, which are sent to zero under the projection $\e_{n}^{\textrm{!`}}(k)\to \Lie_{n}^{\textrm{!`}}(k)$. Hence the action of the components $X_0\wedge \underline{X_1\cdots X_k}$ vanishes (as long as $k\geq 2$) on the image of the HKR map.
\hfil \qed

\appendix

\section{The map $T$ commutes with the differentials}\label{app}

\subsection{The generator $\underline{X_1\cdots X_k}$}

In this case the differential of the generator consists of 
$$
\sum_{j,r}\pm\underline{X_1\dots X_{j-1}*X_{j+r+1}\dots X_k}\circ_*\underline{X_j\dots X_{j+r}}\,,
$$
where the notation $A\circ_* B$ shall mean the operadic composition in $\hoe_{n+1}$ of the operations $A$ and $B$ in $\hoe_{n+1}$, 
with $B$ being ``inserted in the slot'' of $A$ labelled by $*$. The map $T$ sends the above to 
$$
\sum_{j,r}\pm
\begin{tikzpicture}[baseline=-0.65ex]
\node[ext] (v2) at (-1,0.5) {$1$};
\node[ext] (v3) at (1,0.5) {$k$};
\node (v0) at (0,-0.5) {};
\node (vv) at (-0.5,0.5) {$\scriptstyle\cdots$};
\node (vv) at (0.5,0.5) {$\scriptstyle\cdots$};
\node[int] (vi) at (0,0.5) {};
\node[int] (v1) at (0,0) {};
\node[ext] (w1) at (-.5,1) {$j$};
\node (wd) at (0,1) {$\scriptstyle\cdots$};
\node[ext] (w2) at (.5,1) {$\scriptstyle j+r$};
\draw (v0) edge (v1);
\draw (v2) edge (v1);
\draw (v3) edge (v1) (vi) edge (w1) edge (w2) (v1) edge (vi);
\end{tikzpicture}
$$
This is precisely the differential of the tree \eqref{equ:blackcorolla}, which is the image of $\underline{X_1\cdots X_k}$ by $T$. Decorations are obvious. 

\subsection{The generator $X_0\wedge\underline{X_1\cdots X_k}$}

In this case the differential of the generator consists of the following types of 
  \begin{multline}\label{equ:dcontributions}
   \sum_{j}\sum_{r\geq1} \pm X_0 \wedge \underline{X_1\cdots X_{j-1}*X_{j+r+1}\cdots X_{k}}
   \circ_* \underline{X_j\cdots X_{j+r}}
   +
   \sum_{j}\sum_{r\geq1} \pm \underline{X_1\cdots X_{j-1}*X_{j+r+1}\cdots X_{k}}
   \circ_* (X_0 \wedge \underline{X_j\cdots X_{j+r}})
   \\
   \pm (X_0\wedge *) \circ_* \underline{X_1\cdots X_{k}}
   +\sum_j \pm \underline{X_1\cdots X_{j-1} * X_{j+2}\cdots X_{k}}\circ_* (X_0\wedge X_j)\,.   
  \end{multline}

This is mapped under $T$ to a linear combination of trees of the following form 
\[
\sum_j \sum_{r\geq 1} \pm
 \begin{tikzpicture}[baseline=-0.65ex]
\node[ext] (v2) at (-1,0.5) {$1$};
\node[ext] (v3) at (1,0.5) {$k$};
\node (v0) at (0,-0.5) {};
\node (vv) at (-0.5,0.5) {$\scriptstyle\cdots$};
\node (vv) at (0.5,0.5) {$\scriptstyle\cdots$};
\node[int] (vi) at (0,0.5) {};
\node[ext] (v1) at (0,0) {$0$};
\node[ext] (w1) at (-.5,1) {$j$};
\node (wd) at (0,1) {$\scriptstyle\cdots$};
\node[ext] (w2) at (.5,1) {$\scriptstyle j+r$};
\draw (v0) edge (v1);
\draw (v2) edge (v1);
\draw (v3) edge (v1) (vi) edge (w1) edge (w2) (v1) edge (vi);
%\draw (v2) edge[dotted] (v3);
\end{tikzpicture}
+\sum_j \sum_{r\geq 0} \pm
 \begin{tikzpicture}[baseline=-0.65ex]
\node[ext] (v2) at (-1,0.5) {$1$};
\node[ext] (v3) at (1,0.5) {$k$};
\node (v0) at (0,-0.5) {};
\node (vv) at (-0.5,0.5) {$\scriptstyle\cdots$};
\node (vv) at (0.5,0.5) {$\scriptstyle\cdots$};
\node[ext] (vi) at (0,0.5) {$0$};
\node[int] (v1) at (0,0) {};
\node[ext] (w1) at (-.5,1) {$j$};
\node (wd) at (0,1) {$\scriptstyle\cdots$};
\node[ext] (w2) at (.5,1) {$\scriptstyle j+r$};
\draw (v0) edge (v1);
\draw (v2) edge (v1);
\draw (v3) edge (v1) (vi) edge (w1) edge (w2) (v1) edge (vi);
%\draw (v2) edge[dotted] (v3);
\end{tikzpicture}
\pm 
\begin{tikzpicture}[baseline=-0.65ex]
\node[ext] (v11) at (-1.5,0.5) {$0$};
\node[ext] (v2) at (-.7,0.5) {$1$};
\node[ext] (v3) at (1,0.5) {$k$};
\node (v0) at (0,-0.5) {};
\node (vv) at (0,0.5) {$\cdots$};
\node[int] (v1) at (0,0) {};
\draw (v0) edge (v1);
\draw (v2) edge (v1);
\draw (v3) edge (v1);
\draw (v1) edge (v11);
%\draw (v2) edge[dotted] (v3);
\end{tikzpicture}
\]
Here all corollas are decorated by the top degree elements of $\ee_n^*$, except for the last tree, where the decoration is by the element
$X_0\wedge \underline{X_{1}\cdots X_{k}}$. 

One checks that this linear combination of trees is exactly the differential of \eqref{equ:corwhite}, which is the image of the generator we considered by $T$.
Note also that the trees of the form
\[
  \begin{tikzpicture}[baseline=-0.65ex]
\node[ext] (v2) at (-1,0.5) {$1$};
\node[ext] (v3) at (1,0.5) {$k$};
\node (v0) at (0,-0.5) {};
\node (vv) at (-0.5,0.5) {$\scriptstyle \cdots$};
\node (vv) at (0.5,0.5) {$\scriptstyle \cdots$};
\node[ext] (vi) at (0,0.5) {$j$};
\node[int] (v1) at (0,0) {};
\node[ext] (w1) at (0,1.1) {$0$};
\draw (v0) edge (v1);
\draw (v2) edge (v1);
\draw (v3) edge (v1) (vi) edge (w1) (v1) edge (vi);
%\draw (v2) edge[dotted] (v3);
\end{tikzpicture}
\]
occur twice, with the two contributions from the third and fourth term of \eqref{equ:dcontributions} cancelling each other.

\end{document}